\documentclass[10pt]{amsart}
\usepackage{calrsfs}

\usepackage{amsmath,tikz}
\usetikzlibrary{tikzmark,fit}

\usepackage[T1]{fontenc}

\usepackage{graphicx}
\usepackage{amsthm}
\usepackage{amssymb}
\usepackage{multirow}
\usepackage{tikz-cd}
\usepackage{amsmath}
\usepackage{todonotes}
\usepackage{amsbsy}
\usepackage[all]{xy}
\usepackage{qtree}

\usepackage{enumitem}
\usepackage{xfrac}    
\usepackage{color, colortbl}
\definecolor{LightCyan}{rgb}{0.88,1,1}
\definecolor{Gray}{gray}{0.9}

\usepackage{faktor}

\usepackage{changes}
\definechangesauthor[color=orange,name={Aristidesd Kontogeorgis}]{AK}
\definechangesauthor[color=blue,name={Alex Terezakis}]{AT}




\newtheorem{theorem}{Theorem}
\newtheorem{lemma}[theorem]{Lemma}
\newtheorem*{lemma*}{Lemma}

\newtheorem{proposition}[theorem]{Proposition}
\newtheorem*{proposition*}{Proposition}
\theoremstyle{definition}

\newtheorem*{remark}{Remark}

\newcommand{\Gal}{{ \mathrm Gal}}
\renewcommand{\mod}{{\;\mathrm{mod}}}

\date{\today}

\title{On the de Rham cohomology of cyclic covers}

\author[A. Kontogeorgis]{Aristides Kontogeorgis}
\address{Department of Mathematics, National and Kapodistrian  University of Athens
Pane\-pist\-imioupolis, 15784 Athens, Greece}
\email{kontogar@math.uoa.gr}

\author[O. Lygdas]{Orestis Lygdas}
\address{Department of Mathematics, National and Kapodistrian  University of Athens
Pane\-pist\-imioupolis, 15784 Athens, Greece}
\email{olygdas@math.uoa.gr}

\date \today

\makeatletter
\newcommand{\aprod}{\mathop{\operator@font \hbox{\Large$\ast$}}}
\makeatother

\begin{document}

\keywords{De Rham Cohomology, Artin-Schreier and Kummer Covers}

\subjclass{Primary 14F40; Secondary 14H30, 14H37}

\begin{abstract}
We compute explicit bases for the de~Rham cohomology of cyclic covers of the
projective line defined over an algebraically closed field of characteristic $p\geq 0$.  For both Kummer and Artin–Schreier extensions, we describe
precise $k$-bases for the cohomology groups $H^{1}(X,\mathcal{O}_{X})$ and
$H^{0}(X,\Omega_{X})$, and we use these to construct an explicit basis for the
first de~Rham cohomology group $H^{1}_{\mathrm{dR}}(X/k)$ via Čech cohomology.
Our approach relies on detailed computations of divisors of functions and
differentials, together with residue calculations and the duality pairing
between $H^{0}(X,\Omega_{X})$ and $H^{1}(X,\mathcal{O}_{X})$.  The resulting
expressions are given in closed form in terms of the defining equation of the
cover, making the cohomology fully explicit and readily applicable to questions
involving group actions, Hodge--de~Rham filtrations, and the study of $p$-cyclic
covers.
\end{abstract}
\maketitle

\section{Introduction}

There are few classes of curves where explicit computations can be made. Cyclic covers of curves which generalize hyperelliptic curves are among them. H. Hasse in \cite{Hasse34} provided a unified treatment of both Artin-Schreier and Kummer extensions of the rational function fields and H. Boseck \cite{Boseck} gave explicit bases of holomorphic differentials on them. 
Furthermore, the work of Sekiguchi, Oort, and Suwa
\cite{MR1011987},
\cite{SeSu94}, \cite{SeSu95} 
 highlights deeper
connections between Kummer and Artin–Schreier-type extensions via group scheme
theory, suggesting that a unified framework for these computations is possible, \cite{1905.05545},\cite{KaranProc}.

On the other hand there is a lot of interest concerning study of the de~Rham cohomology of algebraic curves over fields of arbitrary characteristic especially with respect to their Galois module structure and the splitting problem of the exact sequence 
\[
	1 \rightarrow H^0(X,\Omega_X) 
	\rightarrow  H^1_{\mathrm{dR}}(X) \rightarrow  H^1(X, \mathcal{O}_X) \rightarrow  0, 
\]
seen as Galois module, see 
\cite{MR4363802},
\cite{MR4630761}, 
\cite{MR4961127},
\cite{MR4881009},  
\cite{MR3782449}.  
For cyclic covers of the projective line, the geometry and arithmetic of the
curve are encoded in an explicit equation, and this allows one to perform
cohomological computations in a concrete manner.  
Nevertheless, even in this relatively accessible class of curves, an explicit
description of the Hodge–de~Rham exact sequence and the associated bases for
$H^1_{\mathrm{dR}}(X/k)$ remains delicate.


The aim of this article is to give explicit $k$-bases for the cohomology groups
$H^1(X,\mathcal{O}_X)$, $H^0(X,\Omega_X)$, and the first de~Rham cohomology group
$H^1_{\mathrm{dR}}(X/k)$ when $X$ is a cyclic cover of the projective line of either
Kummer or Artin--Schreier type.  
Our approach relies on Čech cohomology with respect to the natural affine
covering induced by the projection $X\to\mathbb{P}^1$, combined with explicit
computations of divisors of functions and differentials.  This method allows us
to describe bases in completely elementary terms, making the cohomology
transparent in examples and suitable for applications to deformation theory,
$G$-actions on cohomology, and the study of the Hodge–de~Rham sequence.

The structure of the paper is as follows.  
In Section \ref{sec:Kumm} we treat the case of Kummer extensions, beginning with divisor
computations, then exhibiting bases for $H^1(X,\mathcal{O}_X)$ and
$H^0(X,\Omega_X)$, and finally constructing an explicit basis for
$H^1_{\mathrm{dR}}(X/k)$.  
Section \ref{sec:AS} contains the analogous results for Artin–Schreier extensions, where
the arithmetic of characteristic $p$ introduces new phenomena.  
Throughout both sections, duality pairings and residue computations play a
central role.








\medskip
\noindent {\bf Aknowledgements} The research project is implemented in the framework of H.F.R.I Call “Basic research Financing (Horizontal support of all Sciences)” under the National Recovery and Resilience Plan “Greece 2.0” funded by the European Union Next Generation EU(H.F.R.I.  
Project Number: 14907.
\begin{center}
\includegraphics[scale=0.4]{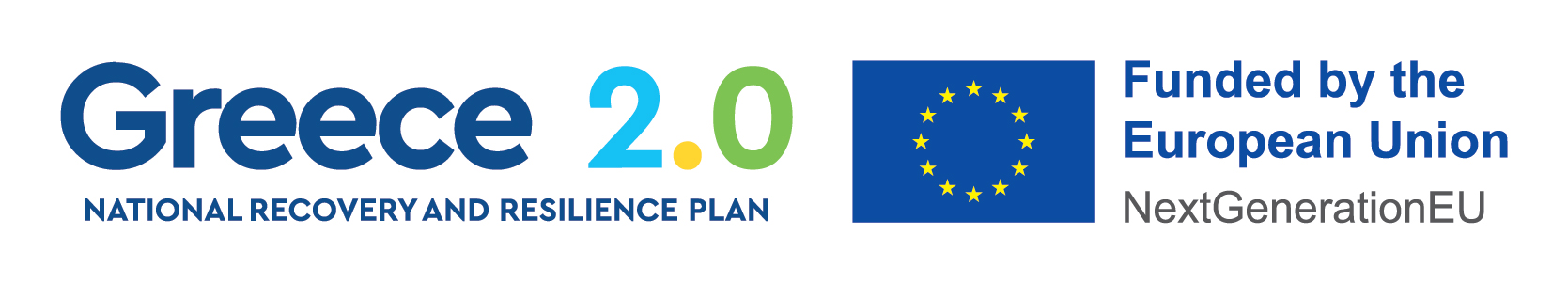}
\hskip 1cm
\includegraphics[scale=0.05]{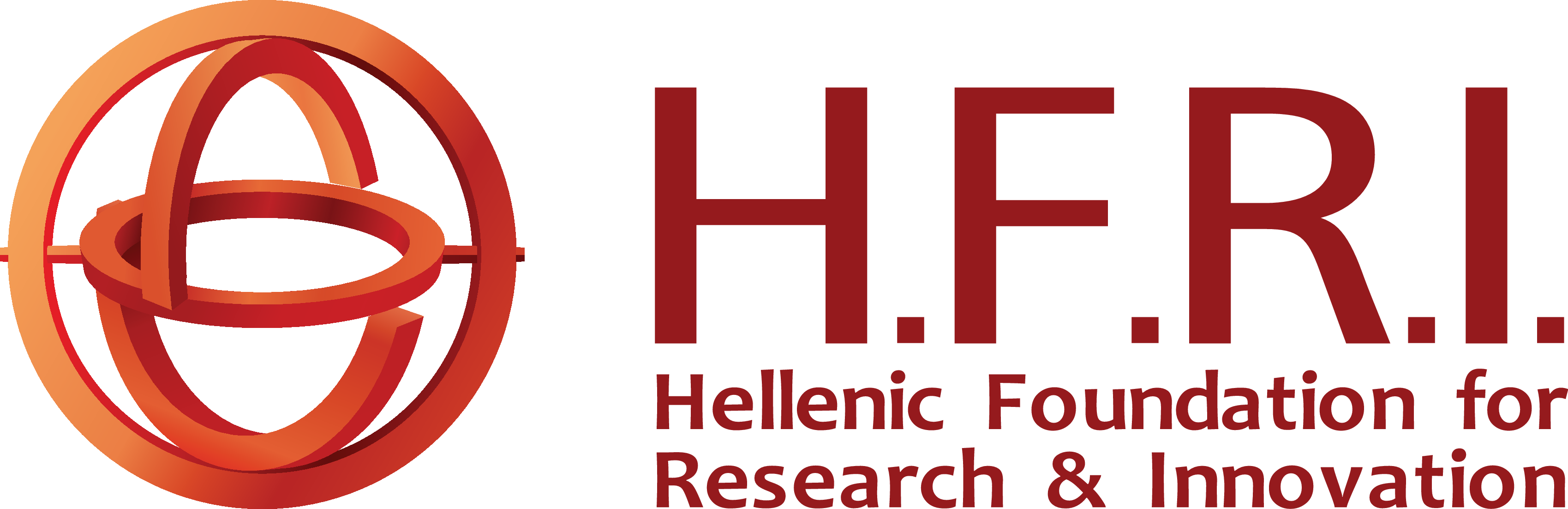}
\end{center}

\section{Kummer extensions}
\label{sec:Kumm}
\subsection{Computations}
We consider the curve given as a Kummer extension of $k(x)$, with $k$ an algebraically closed field of characteristic $p>0$ given by an equation of the form 
\[
	X:y^n=f(x)=\prod_{i=1}^r (x-\rho_i)^{l_i}.
\]
We recall some facts about these curves, see \cite{Boseck}, \cite{Ko:99}, \cite{Ko:98}. 
Without loss of generality we can assume that $l := \deg f(x)\equiv 0 \pmod {n}$. Thus we get that the points above $\infty$ are not ramified in the cover $X \rightarrow \mathbb{P}^1$. The $\rho_i$'s are the only points of $\mathbb{P}^1$ in the branch locus of the cover, and each point above $x=\rho_i$ is ramified with ramification index $e_i= \frac{n}{(n,l_i)}$. The points of $X$ lying above each $\rho_i$ are 
$P_{i,1},\ldots,P_{i,g_i}$, where $g_i=(n,l_i)$, while the points above $\infty$ are $P_{\infty,1},\ldots,P_{\infty,n}$.\par
If $0\in \mathbb{P}^1$ is in the branch locus, then its corresponding exponent in the equation of $X$ will also be denoted as $l_0$, the number of points above it equals $g_0=(n,l_0)$, and their ramification index is $e_0=\frac{n}{(n,l_0)}$. If $0$ is not in the branch locus, then $l_0:=n,g_0:=n,\text{ and } e_0:=1$.\par
We compute 
\begin{align*}
(x) &= e_0\sum_{j=1}^{g_0} P_{0,j} - \sum_{m=1}^n P_{\infty,m}
\\
(y) &= \sum_{i=1}^r \lambda_i\sum_{j=1}^{g_i} P_{i,j} - t\sum_{m=1}^n P_{\infty,m}
\end{align*}
where 
\[
  t=l/n, \text{ and }\lambda_i = \frac{e_i l_i}{n}= \frac{l_i}{(l_i,n)}.
\]
For each $\mu$, $\mu \in \{1,2,\ldots,n-1\}$, we define $m_i^{(\mu)}$, $\upsilon_i^{(\mu)}$ by
\[
  \mu \lambda_i = \mu \frac{l_i}{(l_i,n)}= m_i^{(\mu)} e_i + \upsilon_i^{(\mu)}, \text{ with } 0 \leq \upsilon_i^{(\mu)}  \leq e_i-1.
\]
We set
\begin{equation}
	\label{eq:gmdef}
g_\mu(x) = \prod_{i=1}^r (x-\rho_i)^{m_i^{(\mu)}} \text{ for } \mu \in \{1,2,\ldots,n-1\}
\end{equation}
and we compute 
\begin{equation}
\label{eq:extrael}
  \left( \frac{y^{\mu}}{g_\mu(x)} \right) = \sum_{i=1}^r \upsilon_i^{(\mu)} \sum_{j=1}^{g_i} P_{i,j} 
  - t^{(\mu)} \sum_{m=1}^n P_{\infty,m}, 
\end{equation}
where 
\[
  t^{(\mu)}=\frac{1}{n} \sum_{i=1}^r g_i \upsilon_i^{(\mu)}.
\]
%
%
%
\subsection{A $k$-vector space bases for $H^1(X, \mathcal{O}_X)$, $H^0(X,\Omega_X)$}
We begin by computing a basis for $H^1(X,\mathcal{O}_X)$. 
We denote by $\pi:X \rightarrow \mathbb{P}^1$ a covering map from $X$ to $\mathbb{P}^1$. For every point $a \in \mathbb{P}^1$ we define $U_a := X \backslash \pi^{-1}(a)$.
Observe that $x^iy^j$ for $i,j \geq 0$ are elements of $\mathcal{O}_X(U_\infty)$. Also by equation (\ref{eq:extrael}) the elements 
$
\frac{y^{\mu}}{g_\mu(x)}
$
are in $\mathcal{O}_X(U_\infty)$, while the elements 
\[
 \frac{1}{x^{\nu}} \frac{y^\mu}{g_\mu(x)}  \text{ for } \nu \geq t^{(\mu)}  
 \text{ and } \frac{1}{x^{\nu}} \text{ for } \nu \geq 0,
\]
are in $\mathcal{O}_X(U_0)$. We identify the first sheaf cohomology group of $\mathcal{O}_X$ with the corresponding first Čech cohomology group, so  
\[
  H^{1}(X, \mathcal{O}_X) = \frac{  \mathcal{O}_X(U_0 \cap U_\infty) }{
  \{ f_0- f_\infty: 
f_0 \in \mathcal{O}_X(U_0), f_\infty \in \mathcal{O}_X(U_\infty) \}
  }.
\]
The $k$-vector space $H^1(X, \mathcal{O}_X)$  admits as basis the following  linear equivalence  classes
\[
h_{\mu,\nu} := \left[
\frac{1}{x^{\nu}} \frac{y^\mu}{g_\mu(x)}
  \right], 1\leq \mu \leq n-1 \text{ s.t } t^{(\mu)}\geq 2, 1 \leq \nu \leq t^{(\mu)}-1.
\]
The number of these classes equals 
\[
  \sum_{\mu=1}^{n-1} (t^{(\mu)}-1), 
\]
which are equal to the genus of the curve $X$ according to \cite[eq. (47)]{Boseck}.\par
We will now give a basis for the space  $H^0(X,\Omega_X)$. 
By \cite[Satz 16]{Boseck} we get that a basis of holomorphic differentials is given by 
\begin{equation}
	\label{eq:omegadef}
\omega_{\mu,\nu} := x^{\nu-1} \frac{g_\mu(x)}{y^\mu} dx, \mu\in \{1,\ldots,n-1\} \text{ s.t. } t^{(\mu)}\geq 2, 1 \leq \nu \leq t^{(\mu)}-1.
\end{equation}
For our description of $H^1(X,\mathcal{O}_X)$ using Čech cohomology, we can see from Tait's dissertation \cite[sec. 2.5]{TaitPhd} that Serre's duality map becomes (the coefficient is not essential to the definition of the map, but will be useful later)
\begin{align*}
\langle\cdot,\cdot\rangle:H^0(X,\Omega_X) \times H^1(X, \mathcal{O}_X) & \longrightarrow  k \\
\langle\omega, [f]\rangle & \longmapsto  \left(-\frac{1}{n}\right)\sum_{P\in \pi^{-1}(\infty)}\mathrm{Res}_P(f \omega),
\end{align*}
where $\mathrm{Res}_P(\omega)$ is the residue of the differential $\omega$ at the point $P\in X$  \cite[Chap. II, 7,8,12]{SerreAlgGroupsClassFields}. We want to show that $\{h_{\mu,\nu}\}_{\mu,\nu}$ is the dual basis to $\{\omega_{\mu,\nu}\}_{\mu,\nu}$, i.e. 
\[
\langle \omega_{\mu_1,\nu_1}, h_{\mu_2,\nu_2} \rangle =
\delta_{\mu_1,\mu_2} \delta_{\nu_1,\nu_2}.
\]
We will first recall the definition of the residue of a differential. 
Let $F$ be the function field of $X$, denote by $\widehat{F_P}=k((t))$ the completion of $F$ for the valuation $v_P$, where $t$ is a local uniformizer at $P$. We identify $f \in F$ with its image in $\widehat{F_P}$ which is called the Laurent expansion of $f$ at $P$:
$$f = \sum_{i>>-\infty} c_i t^i, c_i\in k.$$
The residue $\mathrm{Res}_P(\omega)$ 
is defined as the coefficient $c_{-1}$ of the Laurent expansion of $f$ in a decomposition $\omega = f dt$. 
\par 
Some important properties for the residue can be found in no.11 of \cite{SerreAlgGroupsClassFields}:
\begin{enumerate}
	\item $\mathrm{Res}_P(\omega)$ is $k$-linear in $\omega$.
	\item $\mathrm{Res}_P(\omega)=0$ if $v_P(\omega)\geq 0$.
	\item $\mathrm{Res}_P(dg) = 0$ for all $g\in \widehat{F_P}$.
	\item $\mathrm{Res}_P(dg/g)=v_P(g)$ for all $g\in \widehat{F_P}\setminus\{0\}$.
\end{enumerate}
We will also need the following 
 lemma \cite[lemma 4 no. 12]{SerreAlgGroupsClassFields}:
\begin{lemma}
	If we have a covering map $\varphi: X \rightarrow \mathbb{P}^1$ of curves, then for every point $P\in \mathbb{P}^1$,
	\[
	\sum_{Q\rightarrow P} \mathrm{Res}_Q(\omega) = \mathrm{Res}_P(\text{Tr}(\omega)),
	\]
	the sum being over all the points $Q\in X$ such that $\varphi(Q)=P$.
\end{lemma}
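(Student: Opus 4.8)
The plan is to reduce the global identity to a purely local statement at $P$ and then to prove the local statement by comparing two functionals on the module of differentials. Since $\mathrm{Res}_P$ depends only on the Laurent expansion at $P$, I would pass to completions. Writing $E=k(x)$ for the function field of $\mathbb{P}^1$ and recalling that $F$ is the function field of $X$, separability gives $\Omega_{F/k}=F\,dx$ and $\Omega_{E/k}=E\,dx$, so $\mathrm{Tr}(\omega)$ is by definition $\mathrm{Tr}_{F/E}(f)\,dx$ when $\omega=f\,dx$. Separability of $F/E$ also yields the decomposition $F\otimes_E\widehat{E_P}\cong\prod_{Q\to P}\widehat{F_Q}$, under which the global trace becomes $\sum_{Q\to P}\mathrm{Tr}_{\widehat{F_Q}/\widehat{E_P}}$, because the trace of a finite product of field extensions is the sum of the traces of its factors. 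Localizing $\omega$ to a tuple $(\omega_Q)_Q$ and using $k$-linearity of $\mathrm{Res}_P$, the claim splits over the points above $P$, so it suffices to prove, for each finite separable extension of complete local fields $L=\widehat{F_Q}\supseteq K=\widehat{E_P}$ with common residue field $k$, the local identity
\[
\mathrm{Res}_L(\omega_Q)=\mathrm{Res}_K\big(\mathrm{Tr}_{L/K}(\omega_Q)\big).
\]

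For the local identity I would fix a uniformizer $t$ of $K$, use $\Omega_L=L\,dt$ and $\Omega_K=K\,dt$ to write every differential as $h\,dt$, and compare the two $k$-linear continuous functionals $h\,dt\mapsto\mathrm{Res}_L(h\,dt)$ and $h\,dt\mapsto\mathrm{Res}_K(\mathrm{Tr}_{L/K}(h)\,dt)$. I would verify three properties of the second one. It vanishes on exact differentials, since the canonical derivation commutes with the trace, giving $\mathrm{Tr}_{L/K}(dg)=d(\mathrm{Tr}_{L/K}(g))$, which has zero residue by property (3). It vanishes on holomorphic differentials, since $v_L(h\,dt)\ge 0$ means exactly that $h$ lies in the inverse different $\mathfrak{d}_{L/K}^{-1}$, and the defining property $\mathrm{Tr}_{L/K}(\mathfrak{d}_{L/K}^{-1})\subseteq\mathcal{O}_K$ forces $\mathrm{Tr}_{L/K}(h)\in\mathcal{O}_K$, whence the residue vanishes by property (2). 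Finally, on the logarithmic differential $ds/s$ I would use $\mathrm{Tr}_{L/K}(ds/s)=dN_{L/K}(s)/N_{L/K}(s)$, so that property (4) gives $\mathrm{Res}_K(\mathrm{Tr}_{L/K}(ds/s))=v_K(N_{L/K}(s))=v_L(s)=1=\mathrm{Res}_L(ds/s)$.

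In characteristic $0$ these three facts conclude the argument, because modulo exact and holomorphic differentials every class is a scalar multiple of $ds/s$. The step I expect to be the main obstacle is characteristic $p$: there the differentials $s^{-jp-1}\,ds$ with $j\ge 1$ are neither exact nor holomorphic nor logarithmic, yet have residue $0$, so they survive in the relevant quotient and are not controlled by the three properties above. To kill them I would either reduce to monomials $h=s^m$ by continuity and evaluate $\mathrm{Tr}_{L/K}(s^m)$ through the ramification filtration and the different, or invoke Tate's finite-potent description of the residue, in which the identity is an instance of the functoriality of the abstract trace. The delicate point is precisely the wildly ramified case, where the conjugates of a uniformizer $s$ no longer have the simple form $\zeta s$, so the vanishing $\mathrm{Res}_K(\mathrm{Tr}_{L/K}(s^{-jp-1})\,dt)=0$ cannot be read off from a direct root-of-unity computation and must instead be extracted from the structure of the extension.
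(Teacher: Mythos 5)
A preliminary remark on the comparison: the paper does not actually prove this lemma --- it is quoted verbatim from Serre \cite[Lemma 4, no.~12]{SerreAlgGroupsClassFields} --- so the relevant benchmark is Serre's own proof, whose architecture your proposal follows: the decomposition $F\otimes_E\widehat{E_P}\cong\prod_{Q\to P}\widehat{F_Q}$, the splitting of the trace over the factors, and the reduction to the local identity $\mathrm{Res}_L(\omega)=\mathrm{Res}_K(\mathrm{Tr}_{L/K}(\omega))$ for a finite, totally ramified extension $L/K$ of complete fields with residue field $k$. The three verifications you carry out are correct: $d$ commutes with the trace of a separable extension, so exact differentials map to exact ones; $v_L(h\,dt)\geq 0$ is equivalent to $h\in\mathfrak{d}_{L/K}^{-1}$ and $\mathrm{Tr}_{L/K}(\mathfrak{d}_{L/K}^{-1})\subseteq\mathcal{O}_K$, so holomorphic differentials map to holomorphic ones; and $\mathrm{Tr}_{L/K}(ds/s)=dN_{L/K}(s)/N_{L/K}(s)$ together with $v_K(N_{L/K}(s))=v_L(s)$ settles the logarithmic class.

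The gap is the one you flag yourself, and it is genuine rather than a technicality: in characteristic $p$ the classes of $s^{-jp-1}\,ds$, $j\geq 1$, are neither exact, nor holomorphic, nor multiples of $ds/s$ modulo such, so your three properties do not determine the functional $\omega\mapsto\mathrm{Res}_K\bigl(\mathrm{Tr}_{L/K}(\omega)\bigr)$ on them. This paper lives entirely in characteristic $p>0$, and the Artin--Schreier covers of Section~\ref{sec:AS} are wildly ramified (ramification index $p$) at every branch point, so the wildly ramified local case is exactly the content of the lemma in the setting where it is invoked; a proof valid only in characteristic $0$ and in the tame case (where the conjugates of $s$ are $\zeta s$ and the monomial computation you allude to succeeds) does not establish the statement. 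Neither of your proposed repairs is executed: evaluating $\mathrm{Tr}_{L/K}(s^m)$ in a wild extension is precisely where the substance of the classical proofs lies (Serre reduces, via transitivity of trace and residue, to extensions of prime degree and computes the degree-$p$ case explicitly), while appealing to Tate's finite-potent trace replaces the Laurent-coefficient definition of the residue used in this paper by a different one, so it would additionally require proving the two definitions agree. One mitigating observation: at $P=\infty$, the only point where the paper applies the lemma, both the Kummer and the Artin--Schreier covers are unramified, every $\widehat{F_Q}$ equals $\widehat{E_P}$, and the identity follows from the product decomposition alone; so your argument does cover the paper's actual use of the lemma, but not the lemma as stated.
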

In our case we have the covering map 
\begin{align*}
\pi:& X\rightarrow \mathbb{P}^1\\
&(x,y)\mapsto x,
\end{align*}
which corresponds to the Galois extension of function fields $F/E$, where $E=k(x)$ is the function field of $\mathbb{P}^1$. 
If  
$\omega = f dx$, then 
$$\text{Tr}(\omega)=\text{Tr}_{F/E}(\omega) = \text{Tr}_{F/E}(f)dx.$$
Therefore we get
\[
\sum_{m=1}^n \mathrm{Res}_{P_{\infty,m}}(\omega) = \mathrm{Res}_{\infty}(\text{Tr}(\omega)) = \mathrm{Res}_{\infty}(\text{Tr}_{F/E}(f)dx).
\]
\begin{proposition}
	The bases given above for $H^0(X,\Omega_X)$ and $H^1(X,\mathcal{O}_X)$ are dual to each other.
\end{proposition}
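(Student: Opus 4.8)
The plan is to evaluate the pairing $\langle \omega_{\mu_1,\nu_1}, h_{\mu_2,\nu_2}\rangle$ directly from its definition, computing the residues at the points above $\infty$. First I would form the product of the representative $f=\frac{1}{x^{\nu_2}}\frac{y^{\mu_2}}{g_{\mu_2}(x)}$ of $h_{\mu_2,\nu_2}$ with $\omega=\omega_{\mu_1,\nu_1}$, obtaining
\[
f\omega = \frac{1}{x^{\nu_2}}\frac{y^{\mu_2}}{g_{\mu_2}(x)}\cdot x^{\nu_1-1}\frac{g_{\mu_1}(x)}{y^{\mu_1}}\,dx
= x^{\nu_1-\nu_2-1}\,\frac{g_{\mu_1}(x)}{g_{\mu_2}(x)}\,y^{\mu_2-\mu_1}\,dx .
\]
Rather than computing each $\mathrm{Res}_{P_{\infty,m}}$ in turn, I would apply the trace lemma recalled above to rewrite $\sum_{m=1}^n \mathrm{Res}_{P_{\infty,m}}(f\omega) = \mathrm{Res}_{\infty}\bigl(\mathrm{Tr}_{F/E}(f\omega)\bigr)$, reducing the whole computation to a single residue on $\mathbb{P}^1$.

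The decisive step is the evaluation of the trace. Writing $\Gal(F/E)=\langle\sigma\rangle$ with $\sigma(y)=\zeta y$ for a fixed primitive $n$-th root of unity $\zeta$, any element $a(x)y^{j}$ with $a(x)\in E$ satisfies $\mathrm{Tr}_{F/E}(a(x)y^{j})=a(x)y^{j}\sum_{s=0}^{n-1}\zeta^{sj}$, which equals $n\,a(x)y^{j}$ when $n\mid j$ and vanishes otherwise. To apply this I would first reduce $y^{\mu_2-\mu_1}$ modulo the relation $y^{n}=f(x)$, so as to write it as a rational function of $x$ times $y^{j}$ with $0\le j\le n-1$. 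Since $1\le\mu_1,\mu_2\le n-1$ forces $\mu_2-\mu_1\equiv 0\pmod n$ exactly when $\mu_1=\mu_2$, the trace vanishes whenever $\mu_1\neq\mu_2$, giving $\langle\omega_{\mu_1,\nu_1},h_{\mu_2,\nu_2}\rangle=0$ off the diagonal. When $\mu_1=\mu_2$ the factors $g_{\mu_1}(x)/g_{\mu_2}(x)$ and $y^{\mu_2-\mu_1}$ both become trivial, so $\mathrm{Tr}_{F/E}(f\omega)=n\,x^{\nu_1-\nu_2-1}\,dx$.

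It then remains to compute $\mathrm{Res}_{\infty}(x^{m}\,dx)$ with $m=\nu_1-\nu_2-1$. Using the local uniformizer $u=1/x$ at $\infty$ one has $x^{m}\,dx=-u^{-m-2}\,du$, so the residue is $-1$ precisely when $m=-1$ and $0$ otherwise; that is, $\mathrm{Res}_{\infty}(x^{\nu_1-\nu_2-1}\,dx)=-\delta_{\nu_1,\nu_2}$. Feeding this back through the trace identity together with the normalisation constant $-1/n$ in the definition of the pairing yields $\langle\omega_{\mu,\nu_1},h_{\mu,\nu_2}\rangle=\left(-\frac1n\right)\cdot n\cdot(-\delta_{\nu_1,\nu_2})=\delta_{\nu_1,\nu_2}$. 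Combining the two cases gives $\langle\omega_{\mu_1,\nu_1},h_{\mu_2,\nu_2}\rangle=\delta_{\mu_1,\mu_2}\delta_{\nu_1,\nu_2}$. Since both families are indexed by the same set, this Kronecker-delta relation is exactly the assertion that the two bases are dual.

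The main point requiring care is the trace computation when $\mu_1\neq\mu_2$: one must handle the possibly negative exponent $\mu_2-\mu_1$ correctly by clearing it against $y^{n}=f(x)$ before reading off the residue class of $j$ modulo $n$, and confirm that this reduction never produces $j\equiv 0$ unless $\mu_1=\mu_2$. The sign in the residue at infinity and the normalisation by $1/n$ must also be tracked precisely, since these are exactly what conspire to produce the clean Kronecker delta; everything else is routine manipulation of Laurent expansions.
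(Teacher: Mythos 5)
Your proposal is correct and follows essentially the same route as the paper's proof: apply the trace lemma to reduce to a single residue at $\infty$, use $\mathrm{Tr}_{F/E}(y^{j})=y^{j}\sum_{s=0}^{n-1}\zeta^{sj}$ to kill all off-diagonal cases in $\mu$, and then evaluate $\mathrm{Res}_\infty(x^{\nu_1-\nu_2-1}dx)$ to get the Kronecker delta in $\nu$. The only cosmetic difference is that you compute the residue at infinity directly in the uniformizer $u=1/x$, whereas the paper invokes the property $\mathrm{Res}_P(dg/g)=v_P(g)$; both give the same sign bookkeeping and the same conclusion.
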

\begin{proof}
We calculate
\begin{align*}
\langle \omega_{\mu_1,\nu_1}, h_{\mu_2,\nu_2} \rangle =& -\frac{1}{n}\sum_{P\rightarrow \infty} \mathrm{Res}_P\left(\frac{x^{\nu_1-\nu_2}y^{\mu_2-\mu_1}g_{\mu_1}(x)}{g_{\mu_2(x)}}\frac{dx}{x}\right)\\
=& -\frac{1}{n}\mathrm{Res}_{\infty}\left(\text{Tr}_{F/E}\left(\frac{x^{\nu_1-\nu_2}y^{\mu_2-\mu_1}g_{\mu_1}(x)}{g_{\mu_2(x)}}\frac{dx}{x}\right)\right).
\end{align*}\par
If $\mu_1=\mu_2, \nu_1=\nu_2$, then
$$\langle \omega_{\mu_1,\nu_1}, h_{\mu_2,\nu_2} \rangle = -\frac{1}{n} \mathrm{Res}_{\infty}\left(n\frac{dx}{x}\right) = \mathrm{Res}_{\infty}\left(\frac{d\left(\frac{1}{x}\right)}{\frac{1}{x}}\right) = 1,$$
using the properties of the residue mentioned above.\par
Suppose now that either $\mu_1\neq \mu_2$ or $\nu_1\neq \nu_2$. Since the extension is Galois and only $y$ is not in the base field of the extension, we have
$$\text{Tr}_{F/E}\left(\frac{x^{\nu_1-\nu_2-1}y^{\mu_2-\mu_1}g_{\mu_1}(x)}{g_{\mu_2(x)}}\right) = \frac{x^{\nu_1-\nu_2-1}g_{\mu_1}(x)}{g_{\mu_2(x)}} \text{Tr}_{F/E}(y^{\mu_2-\mu_1}).$$
Since we are in the Kummer extension case, $\Gal(E/F) = \langle\sigma\rangle$ where
\begin{align*}
\sigma: F&\rightarrow F\\
x&\mapsto x\\
y&\mapsto y\cdot \zeta_n
\end{align*}
where $\zeta_n$ is a primitive $n$-th root of unity. Thus
\begin{equation} 
	\label{sum:zeta}
\text{Tr}_{F/E}(y^{\mu_2 -\mu_1}) = \sum_{i=0}^{n-1} \sigma^i(y^{\mu_2-\mu_1}) = y^{\mu_2-\mu_1} \sum_{i=0}^{n-1} \zeta_n^{i(\mu_2-\mu_1)}.
\end{equation}
If $\mu_1=\mu_2$ and $\nu_1\neq \nu_2$, then 
$$\langle \omega_{\mu_1,\nu_1}, h_{\mu_2,\nu_2} \rangle = -\frac{1}{n}\mathrm{Res}_{\infty}\left(nx^{\nu_1-\nu_2-1}dx\right).$$
If $\nu_1>\nu_2$, then $\nu_1-\nu_2-1\geq 0$ and
\begin{align*}
\mathrm{Res}_{\infty}\left(nx^{\nu_1-\nu_2-1}dx\right) =& \mathrm{Res}_{\infty}\left(-n\frac{1}{x^{-(\nu_1-\nu_2-1)}}x^2d\left(\frac{1}{x}\right)\right)\\
=&\mathrm{Res}_{\infty}\left(-n\frac{1}{x^{-(\nu_1-\nu_2+1)}}d\left(\frac{1}{x}\right)\right)
\end{align*}
since $d\left(\frac{1}{x}\right) = -\frac{1}{x^2}dx$. Since  $\nu_1-\nu_2+1\geq 2$, from the definition of the residue we see that 
$$\mathrm{Res}_{\infty}\left(-n\frac{1}{x^{-(\nu_1-\nu_2+1)}}d\left(\frac{1}{x}\right)\right) = 0.$$
If $\nu_1<\nu_2$, then working similarly we see that again $\langle \omega_{\mu_1,\nu_1}, h_{\mu_2,\nu_2} \rangle =0$.\par
If $\mu_1\neq \mu_2$, then the sum of roots of unity appearing in eq. (\ref{sum:zeta}) is known to be zero.


In conclusion, 
$$\langle \omega_{\mu_1,\nu_1}, h_{\mu_2,\nu_2} \rangle = 0$$
when either $\mu_1\neq \mu_2$ or $\nu_1\neq \nu_2$.
\end{proof}
\subsection{A $k$-vector space basis for $H^1_{dR}(X/k)$}
Using the same description for the first de Rham cohomology group of $X$ (via Čech cohomology) as the one given in \cite{MR3782449} we have that $H^1_{dR}(X/k)$ can be calculated by the quotient
\begin{center}
\resizebox{.99\hsize}{!}{$\frac{\{(\omega_0,\omega_{\infty},f_{0\infty}) \in \Omega_X(U_0) \times \Omega_X(U_{\infty}) \times \mathcal{O}_X(U_0\cap U_{\infty}) \mid df_{0\infty}=\omega_0|_{U_0\cap U_{\infty}} - \omega_{\infty}|_{U_0\cap U_{\infty}} \} }{
		\{ (df_0, df_{\infty}, f_0|_{U_0\cap U_{\infty}} - f_{\infty}|_{U_0\cap U_{\infty}}) \mid f_0\in \mathcal{O}_X(U_0), f_{\infty}\in \mathcal{O}_X(U_{\infty})\}}.$}	
\end{center}
There are also canonical maps
\begin{align*}
i: H^0(X,\Omega_X) & \rightarrow H^1_{dR}(X/k) \\
\omega & \mapsto [(\omega|_{U_0},\omega|_{U_{\infty}},0)]
\end{align*}
and
\begin{align*}
p: H^1_{dR}(X/k) & \rightarrow H^1(X,\mathcal{O}_X) \\
[(\omega_0, \omega_{\infty}, f_{0\infty})] & \mapsto [f_{0\infty}].
\end{align*}
From Proposition 4.1.2 of Tait's dissertation \cite{TaitPhd} we get that the following short sequence is exact (the result is about hyperelliptic curves, but it can be extended with no changes in the proof in our setting):
\begin{equation*}
0\rightarrow H^0(X,\Omega_X) \xrightarrow{i} H^1_{dR}(X/k) \xrightarrow{p} H^1(X,\mathcal{O}_X) \rightarrow 0,
\end{equation*}
hence the basis of the $k$-vector space $H^1_{dR}(X/k)$ can be calculated from the bases of $H^0(X,\Omega_X)$ and $H^1(X,\mathcal{O}_X)$.\par
In order to write down an explicit $k$-basis for the first de Rham cohomology group, we will need the following notation: Say we have $h(x)\in k[x]$ and $m\in \mathbb{N}$, then $h^{\leq m}(x)$ (respectively $h^{>m}(x)$) is the sum of monomials of $h(x)$ with degree $\leq m$ (respectively $>m$).
\begin{theorem}\label{Kummer_basis}
 Let $I^{(\mu)} \subset \mathbb{N}$ defined by 
$$I^{(\mu)}:= \{1\leq i\leq r \vert \upsilon_i^{(\mu)}\neq 0\}.$$
Set
$$\psi_{\mu,\nu}(x):= \sum_{i=1}^r \left(g_i\upsilon_i^{(\mu)}x\prod_{j\in I^{(\mu)}\setminus\{i\}}(x-\rho_j)\right) - \nu n\prod_{i\in I^{(\mu)}}(x-\rho_i)\in k[x].$$
Then the elements
\begin{gather*}
a_{\mu,\nu} := \left[\left(\frac{\psi_{\mu,\nu}^{\leq\nu+1}(x)}{nx^{\nu+1}}\frac{g_{n-\mu}(x)}{y^{n-\mu}}dx, \frac{\psi_{\mu,\nu}^{>\nu+1}(x)}{nx^{\nu+1}}\frac{g_{n-\mu}(x)}{y^{n-\mu}}dx, \frac{y^{\mu}}{x^{\nu}g_{\mu}(x)}\right)\right] \\
\text{ for } 1\leq \mu \leq n-1 \text{ s.t. } t^{(\mu)}\geq 2 \text{ and } 1\leq \nu \leq t^{(\mu)}-1
\end{gather*}
together with the elements
\begin{gather*}
\delta_{\mu,\nu} := \left[\left(\frac{x^{\nu-1}g_{\mu}(x)}{y^{\mu}}dx,\frac{x^{\nu-1}g_{\mu}(x)}{y^{\mu}}dx,0\right)\right] \\
\text{ for } 1\leq \mu \leq n-1 \text{ s.t. } t^{(\mu)}\geq 2 \text{ and } 1\leq \nu \leq t^{(\mu)}-1
\end{gather*}
form a basis for $H^1_{dR}(X/k)$ as a $k$-vector space.
\end{theorem}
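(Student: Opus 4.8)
The plan is to deduce the theorem from the short exact sequence
$0\to H^0(X,\Omega_X)\xrightarrow{i} H^1_{\mathrm{dR}}(X/k)\xrightarrow{p} H^1(X,\mathcal{O}_X)\to 0$
together with the two bases already in hand. The guiding observation is that $\delta_{\mu,\nu}=i(\omega_{\mu,\nu})$ is the image under $i$ of the holomorphic differential from \eqref{eq:omegadef}, while the third coordinate of $a_{\mu,\nu}$ is the representative $\tfrac{y^{\mu}}{x^{\nu}g_{\mu}(x)}$ of $h_{\mu,\nu}$, so that $p(a_{\mu,\nu})=h_{\mu,\nu}$. Granting that all the listed triples are genuine de~Rham cocycles, the conclusion is then pure linear algebra: in a short exact sequence of $k$-vector spaces, the union of the image of a basis of the kernel with any set of lifts of a basis of the quotient is a basis of the middle term. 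Thus the real work is well-definedness, and I would structure the proof around verifying it for the $a_{\mu,\nu}$, the $\delta_{\mu,\nu}$ being automatic as images of holomorphic differentials.

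The central computation is the cocycle identity for the triple defining $a_{\mu,\nu}$. First I would compute $df_{0\infty}$ for $f_{0\infty}=\tfrac{y^{\mu}}{x^{\nu}g_{\mu}(x)}$ logarithmically: from $\tfrac{df_{0\infty}}{f_{0\infty}}=\mu\tfrac{dy}{y}-\nu\tfrac{dx}{x}-\tfrac{dg_{\mu}}{g_{\mu}}$, the relation $n\tfrac{dy}{y}=\sum_i\tfrac{l_i}{x-\rho_i}dx$, and the defining equation $\mu\lambda_i=m_i^{(\mu)}e_i+\upsilon_i^{(\mu)}$ --- which, using $e_i=n/g_i$ and $\lambda_i/e_i=l_i/n$, gives $\tfrac{\mu l_i}{n}-m_i^{(\mu)}=\tfrac{g_i\upsilon_i^{(\mu)}}{n}$ --- one collects the partial fractions over a common denominator and recognizes the numerator as exactly $\psi_{\mu,\nu}(x)$. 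After rewriting $\tfrac{y^{\mu}}{g_{\mu}(x)}=\tfrac{g_{n-\mu}(x)}{y^{n-\mu}}\prod_{i\in I^{(\mu)}}(x-\rho_i)$, which follows from $y^{n}=f(x)$ together with the fact that $m_i^{(\mu)}+m_i^{(n-\mu)}$ equals $l_i$ for $i\notin I^{(\mu)}$ and $l_i-1$ for $i\in I^{(\mu)}$, this yields $df_{0\infty}=\tfrac{\psi_{\mu,\nu}(x)}{nx^{\nu+1}}\tfrac{g_{n-\mu}(x)}{y^{n-\mu}}dx$. Splitting $\psi_{\mu,\nu}=\psi^{\leq\nu+1}+\psi^{>\nu+1}$ then breaks $df_{0\infty}$ into the two displayed differentials $\omega_0$ and $\omega_\infty$, which is the required cocycle relation (the sign being that of the Čech--de~Rham differential).

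The hard part, and the reason the truncation degree is precisely $\nu+1$, is the regularity bookkeeping: one must check $\omega_0\in\Omega_X(U_0)$ (poles allowed only above $x=0$), $\omega_\infty\in\Omega_X(U_\infty)$ (poles only above $x=\infty$), and $f_{0\infty}\in\mathcal{O}_X(U_0\cap U_\infty)$. For this I would assemble three ingredients: the divisor of $\tfrac{g_{n-\mu}(x)}{y^{n-\mu}}$ read off from \eqref{eq:extrael}; the divisor of $dx$ (a zero of order $e_i-1$ at each $P_{i,j}$ from the ramification, and a double pole at each $P_{\infty,m}$ since $\infty$ is unramified); and the elementary observation that $\psi^{\leq\nu+1}(x)/x^{\nu+1}$ involves only non-positive powers of $x$, whereas $\psi^{>\nu+1}(x)/x^{\nu+1}$ is a polynomial. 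Summing local orders point by point over the $\rho_i$ and over $\infty$ should confirm that $\omega_0$ and $\omega_\infty$ have poles only above $0$ and above $\infty$ respectively. I expect this to be the most delicate step, requiring a careful case distinction at the branch points and in the edge case where $0$ is itself a branch point.

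With well-definedness in hand, the proof closes quickly. The $g$ classes $a_{\mu,\nu}$ satisfy $p(a_{\mu,\nu})=h_{\mu,\nu}$, so they project to a basis of $H^1(X,\mathcal{O}_X)$ and are in particular independent modulo $\ker p=\operatorname{im} i$; the $g$ classes $\delta_{\mu,\nu}=i(\omega_{\mu,\nu})$ form a basis of $\operatorname{im} i$ because $i$ is injective and $\{\omega_{\mu,\nu}\}$ is a basis of $H^0(X,\Omega_X)$. Hence the $2g$ elements $\{a_{\mu,\nu}\}\cup\{\delta_{\mu,\nu}\}$ are linearly independent, and since $\dim_k H^1_{\mathrm{dR}}(X/k)=2g$ by the exact sequence, they form a basis.
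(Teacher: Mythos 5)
Your overall strategy is the same as the paper's: reduce everything, via the short exact sequence, to the well-definedness of the triples $a_{\mu,\nu}$, and establish the cocycle identity $df_{0\infty}=\frac{\psi_{\mu,\nu}(x)}{nx^{\nu+1}}\,\omega_{n-\mu,1}$. Your derivation of that identity by logarithmic differentiation (using $n\,dy/y=(f'/f)\,dx$, the relation $\mu l_i=n m_i^{(\mu)}+g_i\upsilon_i^{(\mu)}$, and the rewriting $y^{\mu}/g_{\mu}(x)=\bigl(g_{n-\mu}(x)/y^{n-\mu}\bigr)\prod_{i\in I^{(\mu)}}(x-\rho_i)$, which is exactly \eqref{eq:gg}) is correct and is an equivalent, if anything cleaner, version of the paper's computation, which gets the same formula by differentiating the $n$-th power $\phi_{\mu}(x)/x^{\nu n}$.

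However, the step you postpone is a genuine gap, not bookkeeping: the assertion that summing local orders ``should confirm'' $\omega_0\in\Omega_X(U_0)$ is \emph{false} for the truncation at degree $\nu+1$, so your outline cannot be completed as written. The dangerous points are those above $\infty$ (not, as you anticipate, the branch points or the case where $0$ branches; there $\omega_{n-\mu,1}$ is regular and nothing can go wrong). At a point $P$ above $\infty$ one has $v_P(\omega_{n-\mu,1})=t^{(n-\mu)}-2$, while $\psi_{\mu,\nu}^{\leq\nu+1}(x)/x^{\nu+1}$ is generically a unit at $P$, its value being the coefficient of $x^{\nu+1}$ in $\psi_{\mu,\nu}$; so regularity of the first component forces $t^{(n-\mu)}\geq 2$. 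This is \emph{not} implied by $t^{(\mu)}\geq 2$: since $\upsilon_i^{(\mu)}+\upsilon_i^{(n-\mu)}=e_i$ for $i\in I^{(\mu)}$, one has $t^{(\mu)}+t^{(n-\mu)}=|I^{(\mu)}|$, and $t^{(n-\mu)}=1$ does occur. Concretely, take $y^3=f(x)=(x-\rho_1)(x-\rho_2)(x-\rho_3)$ with distinct nonzero roots, $\rho_1+\rho_2+\rho_3\neq 0$ and $p\nmid 3$: the only admissible pair is $(\mu,\nu)=(2,1)$, one finds $t^{(1)}=1$, so $\omega_{1,1}=dx/y$ has simple poles above $\infty$, and $\psi_{2,1}=2xf'-3f$ has $x^2$-coefficient $-(\rho_1+\rho_2+\rho_3)\neq 0$; hence the proposed first component $\frac{\psi_{2,1}^{\leq 2}(x)}{3x^{2}}\frac{dx}{y}$ has simple poles at all three points above $\infty$ and is not a section of $\Omega_X$ over $U_0$. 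The repair, which your own divisor bookkeeping would produce if pushed to the end, is to truncate at degree $\nu$ instead of $\nu+1$: since $I^{(\mu)}\neq\emptyset$ gives $t^{(n-\mu)}\geq 1$, the function $\psi_{\mu,\nu}^{\leq\nu}(x)/x^{\nu+1}$ vanishes above $\infty$ and compensates the at-worst-simple pole of $\omega_{n-\mu,1}$, while $\psi_{\mu,\nu}^{>\nu}(x)/x^{\nu+1}$ is still a polynomial, so the second component (taken with a minus sign, to honor the convention $df_{0\infty}=\omega_0-\omega_\infty$ that you correctly flagged) stays in $\Omega_X(U_\infty)$. Be aware that the paper's own proof shares this defect --- it dismisses precisely this verification with ``of course the first summand is in $\Omega_0$'' --- so an honest completion of your argument requires amending the representatives in the statement of Theorem \ref{Kummer_basis}, not merely verifying them.
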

\begin{proof}
It can be easily seen that the projections of the $a_{\mu,\nu}$'s form the basis of $H^1(X,\mathcal{O}_X)$ and that the unique preimages of the $\delta_{\mu,\nu}$'s form the basis of $H^0(X,\Omega_X)$. Therefore, if the $a_{\mu,\nu}$'s and $\delta_{\mu,\nu}$'s are well-defined elements of $H^1_{dR}(X/k)$, then they indeed form a $k$-basis for this space. The $\delta_{\mu,\nu}$'s are evidently well-defined, so we focus on the $a_{\mu,\nu}$'s.\par
First, we show that $\phi_{\mu}(x):=\left(\frac{y^{\mu}}{g_{\mu}(x)}\right)^n $ is a polynomial of $x$:
\begin{align*}
\phi_{\mu}(x) = \left(\frac{y^{\mu}}{g_{\mu}(x)}\right)^n & = \frac{f(x)^{\mu}}{g_{\mu}(x)^n} = \frac{\prod_{i=1}^{r} (x-\rho_i)^{\mu l_i}}{\prod_{i=1}^{r} (x-\rho_i)^{n m^{(\mu)}_i}}\\
& = \prod_{i=1}^{r} (x-\rho_i)^{\upsilon^{(\mu)}_i (n,l_i)}.
\end{align*}
Because we'll need it later we write here a convenient expression for the formal derivative of $\log(\phi_{\mu}(x))$:
\begin{gather}
	\label{eq:logder}
\phi'_{\mu}(x) = \left(\prod_{i=1}^{r} (x-\rho_i)^{\upsilon^{(\mu)}_i (n,l_i)}\right)' = \phi_{\mu}(x)\sum_{i=1}^{r}\frac{\upsilon^{(\mu)}_i (n,l_i)}{x-\rho_i}\\
\Rightarrow \frac{\phi'_{\mu}(x)}{\phi_{\mu}(x)} = \sum_{i=1}^{r}\frac{\upsilon^{(\mu)}_i (n,l_i)}{x-\rho_i} = \sum_{i\in I^{(\mu)}}\frac{\upsilon^{(\mu)}_i (n,l_i)}{x-\rho_i}. \nonumber
\end{gather}\par
Now, since we cannot do much to calculate the differential of $\frac{y^{\mu}}{x^{\nu}g_{\mu}(x)}$, we'll calculate the differential of its n-th power instead:
\begin{align*}
d\left(\left(\frac{y^{\mu}}{x^\nu g_\mu(x)} \right)^n \right) & = d\left(\frac{\phi_{\mu}(x)}{x^{\nu n}}\right)\\
& = \frac{\phi'_{\mu}(x) x^{\nu n} - \phi_{\mu}(x)\nu n x^{\nu n -1}}{x^{2\nu n}}dx\\
& = \frac{\phi'_{\mu}(x)}{x^{\nu n}}dx - \frac{\phi_{\mu}(x)\nu n}{x^{\nu n + 1}}dx.
\end{align*}
Also
\begin{gather*}
d\left(\left(\frac{y^{\mu}}{x^\nu g_\mu(x)} \right)^n \right) = n\left(\frac{y^{\mu}}{x^{\nu}g_{\mu}(x)}\right)^{n-1}d\left(\frac{y^{\mu}}{x^{\nu}g_{\mu}(x)}\right)
\end{gather*}
hence
$$n\left(\frac{y^{\mu}}{x^{\nu}g_{\mu}(x)}\right)^{n-1}d\left(\frac{y^{\mu}}{x^{\nu}g_{\mu}(x)}\right) = \frac{\phi'_{\mu}(x)}{x^{\nu n}}dx - \frac{\phi_{\mu}(x)\nu n}{x^{\nu n + 1}}dx$$
which implies
\begin{align} \nonumber
d\left(\frac{y^{\mu}}{x^{\nu}g_{\mu}(x)}\right) = & \frac{1}{n} \left(\frac{x^{\nu}g_{\mu}(x)}{y^{\mu}}\right)^{n-1}\left(\frac{\phi'_{\mu}(x)}{x^{\nu n}}dx - \frac{\phi_{\mu}(x)\nu n}{x^{\nu n + 1}}dx\right)
\\
\label{eq:derder}
= & \left(\frac{y^{\mu}}{g_{\mu}(x)}\right)\left(\frac{1}{nx^{\nu+1}}\right)\left(\frac{x\phi'_{\mu}(x)}{\phi_{\mu}(x)} - \nu n\right)dx
\\
\nonumber
= & \left(\frac{y^{\mu}\cdot y^{n-\mu}}{g_{\mu}(x)\cdot g_{n-\mu}(x)}\right)\left(\frac{1}{nx^{\nu+1}}\right)\left(\frac{x\phi'_{\mu}(x)}{\phi_{\mu}(x)} - \nu n\right)\left(\frac{g_{n-\mu}(x)}{y^{n-\mu}}dx\right)
\nonumber
\\
\nonumber
= & \frac{1}{nx^{\nu+1}}\left(\frac{f(x)}{g_{\mu}(x)\cdot g_{n-\mu}(x)}\right)\left(\frac{x\phi'_{\mu}(x)}{\phi_{\mu}(x)} - \nu n\right)\omega_{n-\mu,1},
\end{align}
where $\omega_{n-\mu,1}\in H^0(X,\Omega_X)$ is a  holomorphic differential as defined in eq. (\ref{eq:omegadef}).\par
From the definition of $g_{\mu}(x)$ in eq. (\ref{eq:gmdef}), we know that:
$$g_{\mu}(x)\cdot g_{n-\mu}(x) = \prod_{i=1}^r (x-\rho_i)^{m_i^{(\mu)}+m_i^{(n-\mu)}}.$$
For the exponents appearing in the above expression we have that for $i\in \{1,\dots,r\}$ and $\mu\in \{1,\dots,n-1\}$:
\begin{align*}
\mu \lambda_i =& m_i^{(\mu)} e_i + \upsilon_i^{(\mu)}\\
(n-\mu) \lambda_i =& m_i^{(n-\mu)} e_i + \upsilon_i^{(n-\mu)}
\end{align*}
and adding these two equations we get (remember $\lambda_i=\frac{l_i}{(n,l_i)}$ and $e_i=\frac{n}{(n,l_i)}$):
$$n\frac{l_i}{(n,l_i)} = (m_i^{(\mu)}+m_i^{(n-\mu)}) \frac{n}{(n,l_i)} + (\upsilon_i^{(\mu)}+\upsilon_i^{(n-\mu)}),$$
with $\upsilon_i^{(\mu)},\upsilon_i^{(n-\mu)} \in \{0,\dots,\frac{n}{(n,l_i)}-1\}$. Taking the last equation $\mod e_i$ gives that:
$$\upsilon_i^{(\mu)}+\upsilon_i^{(n-\mu)}\equiv 0\mod e_i.$$
Therefore:
$$\upsilon_i^{(\mu)} \equiv 0\mod e_i \Leftrightarrow \upsilon_i^{(n-\mu)} \equiv 0\mod e_i \Leftrightarrow \upsilon_i^{(\mu)} = 0 \Leftrightarrow \upsilon_i^{(n-\mu)} =0.$$ 
Otherwise $\upsilon_i^{(\mu)}+\upsilon_i^{(n-\mu)} = e_i$. We conclude that:
\[
m_i^{(\mu)}+m_i^{(n-\mu)} = 
\begin{cases}
l_i-1 & \text{for } i\in I^{(\mu)} \\
l_i & \text{otherwise,}
\end{cases}
\]
hence:
\begin{equation}\label{eq:gg}
g_{\mu}(x)\cdot g_{n-\mu}(x) = \frac{\prod_{i=1}^r (x-\rho_i)^{l_i}}{\prod_{i\in I^{(\mu)}}(x-\rho_i)} = \frac{f(x)}{\prod_{i\in I^{(\mu)}}(x-\rho_i)}.
\end{equation}
\par
Going back to the calculation of $d\left(\frac{y^{\mu}}{x^{\nu}g_{\mu}(x)}\right)$  in eq. (\ref{eq:derder}) using eq. (\ref{eq:gg}) we have:
\begin{align*}
d\left(\frac{y^{\mu}}{x^{\nu}g_{\mu}(x)}\right) =& \frac{\prod_{i\in I^{(\mu)}}(x-\rho_i)\left(x\sum_{j\in I^{(\mu)}}\frac{\upsilon_j^{(\mu)}(n,l_j)}{x-\rho_j}-\nu n\right)}{nx^{\nu+1}}\omega_{n-\mu,1}\\
=&\frac{\psi_{\mu,\nu}(x)}{nx^{\nu+1}}\omega_{n-\mu,1}\\
=&\frac{\psi_{\mu,\nu}^{\leq \nu+1}(x)}{nx^{\nu+1}}\omega_{n-\mu,1}-\frac{\psi_{\mu,\nu}^{> \nu+1}(x)}{nx^{\nu+1}}\omega_{n-\mu,1},
\end{align*}
where of course the first summand is in $\Omega_0$, and the second summand is in $\Omega_{\infty}$, as we wanted.
\end{proof}
\section{Artin-Schreier extensions}
\label{sec:AS}
\subsection{Computations}
We consider the curve given as an Artin-Schreier extension of $k(x)$, where $k$ is again an algebraically closed field of characteristic $p\geq 3$, given by an equation of the form 
\[
X:y^p-y=r(x)=\frac{f(x)}{\prod_{i=1}^r (x-\rho_i)^{l_i}}.
\]
Without loss of generality we can assume that $f(x)$ is prime to the denominator of $r(x)$, that $(l_i,p)=1$ for all $i\in \{1,\dots,r\}$, that $x\nmid f(x)$ (i.e. $0$ is not a root of $f(x)$), and that the points above $\infty$ are neither poles nor roots for $y$ (which means that $\deg f(x) = \sum_{i=1}^r l_i =:l$). This means that $\infty$ is not in the branch locus of the cover $X \rightarrow \mathbb{P}^1$ and the points above it are $P_{\infty,1},\ldots,P_{\infty,p}$. For any given Artin-Schreier curve, we can use Möbius transformations 
of the base curve $\mathbb{P}^1$ to make it satisfy the above conditions. The points of $X$ lying above each $x=\rho_i$ are $P_i$ with ramification index $p$.\par
If $0\in \mathbb{P}^1$ is in the branch locus, then $g_0:=1 \text{ and } e_0:=p$. If $0$ is not in the branch locus, then $g_0:=p \text{ and } e_0:=1$.\par
We collect several known facts for the divisors of functions and differentials, see \cite[sec. 3]{Boseck}
\begin{align*}
(x) &= e_0\sum_{j=1}^{g_0} P_{0,j} - \sum_{m=1}^p P_{\infty,m}\\
(y) &= (y)_0 - \sum_{i=1}^r l_i P_i = (f(x))_0 - \sum_{i=1}^r l_i P_i,
\end{align*}
where in the root divisor $(y)_0=(f(x))_0$ of $y$, none of the $P_i$'s appears. Moreover
\begin{align*}
\left(\prod_{i=1}^r (x-\rho_i)^{l_i}\right) &= p\sum_{i=1}^r l_i P_i - l \sum_{m=1}^p P_{\infty,m}\\
(y^{p-1}-1) &= A - (p-1)\sum_{i=1}^r l_i P_i
\end{align*}
and $A$ is the root part of the divisor of $y^{p-1}-1$ where the roots of $f(x)$ appear.\par
We will also need the divisor of the differential $dx$:
\[
(dx) = \sum_{i=1}^{r} (p-1)(l_i+1)P_i - 2\sum_{m=1}^{p} P_{\infty,m},
\]
hence for $\mu$ a positive integer we get
\[
\left(y^{\mu}dx\right) = \mu (y)_0 + \sum_{i=1}^r \big((p-1-\mu)l_i + (p-1)\big)P_i - 2\sum_{m=1}^p P_{\infty,m}.
\]\par
For each $\mu$, $\mu \in \{1,2,\ldots,p-1\}$, we define $m_i^{(\mu)}$ and $\upsilon_i^{(\mu)}$ as the quotient and the residue, respectively, of the following Euclidean division
\[
(p-1-\mu) l_i + (p-1) = m_i^{(\mu)}p + \upsilon_i^{(\mu)}, \text{ with } 0 \leq \upsilon_i^{(\mu)}  \leq p-1.
\]
We set
\[
g_\mu(x) = \prod_{i=1}^r (x-\rho_i)^{m_i^{(\mu)}} \text{ for } \mu \in \{1,2,\ldots,p-1\}
\]
and we have that
$$(g_{\mu}(x)) = p\sum_{i=1}^r m_i^{(\mu)} P_i - t^{(\mu)}\sum_{m=1}^p P_{\infty,m},$$
where 
\[
t^{(\mu)}=\sum_{i=1}^{r}m_i^{(\mu)}.
\]
Now, we compute
\begin{equation*}
\left(\frac{y^{\mu}}{g_{\mu}(x)}dx\right) = \mu (y)_0 + \sum_{i=1}^{r} \upsilon_i^{(\mu)} P_i + (t^{(\mu)} -2)\sum_{m=1}^p P_{\infty,m}.
\end{equation*}
Finally we will need 
\begin{align}\nonumber \label{eq:extrael2}
	\left(g_{p-\mu}(x)y^{\mu-1}\right) =& p\sum_{i=1}^r m_i^{(p-\mu)} P_i - t^{(p-\mu)}\sum_{m=1}^p P_{\infty,m}\\
	& + (\mu-1)(f(x))_0 - \sum_{i=1}^r (\mu-1)l_i P_i\\
\nonumber	=& \sum_{i=1}^r (pm_i^{(p-\mu)}-(\mu-1)l_i)P_i + (\mu-1)(f(x))_0- t^{(p-\mu)}\sum_{m=1}^p P_{\infty,m}
\end{align}
where $pm_i^{(p-\mu)}-(\mu-1)l_i = p-1 - \upsilon_i^{(p-\mu)}\geq 0$.
\subsection{A $k$-vector space basis for $H^1(X, \mathcal{O}_X)$}
As in the Kummer case,
we denote by $\pi:X \rightarrow \mathbb{P}^1$ a covering map from $X$ to $\mathbb{P}^1$. For every point $a \in \mathbb{P}^1$ we define $U_a := X \backslash \pi^{-1}(a)$. Observe that by the calculation (\ref{eq:extrael2}) the elements
\[
\frac{g_{p-\mu}(x)y^{\mu-1}}{x^{\nu}} \text{ for } 1 \leq \mu \leq p-1,\text{ s.t. } t^{(p-\mu)}\geq 2, 1\leq \nu \leq t^{(p-\mu)} - 1
\]
have poles only above zero and infinity, i.e. they are in $\mathcal{O}_X(U_0\cap U_{\infty})$.
By  identifying  the first sheaf cohomology group of $\mathcal{O}_X$ with the corresponding first Čech cohomology group, then we get that 
\[
H^{1}(X, \mathcal{O}_X) = \frac{  \mathcal{O}_X(U_0 \cap U_\infty) }{
	\{ f_0- f_\infty: 
	f_0 \in \mathcal{O}_X(U_0), f_\infty \in \mathcal{O}_X(U_\infty) \}
}.
\]
From Serre duality, this $H^1(X, \mathcal{O}_X)$ seen as a $k$-vector space, is dual to the $k$-vector space $H^0(X,\Omega_X)$. From the calculations of the previous paragraph and from \cite{Boseck} we can see that a basis of $H^0(X,\Omega_X)$ is given by the differentials 
\begin{align*}
\label{differential_basis}
\omega_{\mu,\nu}:=x^{\nu-1}\frac{{y}^{\mu}}{{g_{\mu}(x)}}dx, 1\leq \mu \leq p-1, \text{ s.t. } t^{(\mu)}\geq 2, 1 \leq \nu \leq t^{(\mu)}-1.
\end{align*}
Also, the following linear equivalence classes
\begin{align*}
h_{\mu,\nu} := \left[\frac{g_{p-\mu}(x)y^{\mu-1}}{x^{\nu}}\right] \text{ with }1\leq \mu \leq p-1, \text{ s.t. } t^{(p-\mu)}\geq 2, 1 \leq \nu \leq t^{(p-\mu)}-1.
\end{align*}
are well defined, linearly independent elements of $H^1(X, \mathcal{O}_X)$. Since they are in bijective correspondence with the elements $\omega_{\mu,\nu}$, we know that they form a basis (the $k$-vector space dimension of $H^0(X,\Omega_X)$ is equal to the $k$-vector space dimension of $H^1(X,\mathcal{O}_X)$).\par
\begin{remark}
	Note that the condition $t^{(\mu)}\geq 2$ means that $\mu < p-1$, because for $\mu=p-1$ we have $m_i^{(\mu)} = 0$ for all $i$. For the same reason the condition $t^{(p-\mu)}\geq 2$ means that $\mu > 1$.
\end{remark}
As in the section of the Kummer case, using the map
\begin{align*}
\langle\cdot,\cdot\rangle:H^0(X,\Omega_X) \times H^1(X, \mathcal{O}_X) & \longrightarrow  k \\
\langle\omega, [f]\rangle & \longmapsto  \sum_{P\in \pi^{-1}(\infty)}\mathrm{Res}_P(f \omega)
\end{align*}
we will show that:
\begin{proposition}
	The bases of $H^0(X,\Omega_X)$ and $H^1(X,\mathcal{O}_X)$ that we have written above are dual to each other.
\end{proposition}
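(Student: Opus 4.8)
The plan is to evaluate the pairing on basis elements directly and to show that the resulting matrix is a permutation matrix. Substituting the two basis vectors, the integrand becomes
\[
\langle \omega_{\mu_1,\nu_1}, h_{\mu_2,\nu_2}\rangle = \sum_{P \to \infty}\mathrm{Res}_P\left(\frac{g_{p-\mu_2}(x)}{g_{\mu_1}(x)}\,x^{\nu_1-\nu_2-1}\,y^{\mu_1+\mu_2-1}\,dx\right).
\]
Since $\infty$ is unramified in the cover, I would first invoke the trace lemma recalled above for $\pi\colon X \to \mathbb{P}^1$ to rewrite this sum of residues as $\mathrm{Res}_\infty$ of $\mathrm{Tr}_{F/E}$ of the same differential. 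Because the factor $g_{p-\mu_2}(x)/g_{\mu_1}(x)\cdot x^{\nu_1-\nu_2-1}$ lies in $E=k(x)$, it pulls out of the trace by $E$-linearity, so the entire computation reduces to understanding $\mathrm{Tr}_{F/E}(y^{\mu_1+\mu_2-1})$.

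The heart of the argument is this trace of a power of $y$. Using $\Gal(F/E)=\langle\sigma\rangle$ with $\sigma(y)=y+1$, we have $\mathrm{Tr}_{F/E}(y^k)=\sum_{\ell=0}^{p-1}(y+\ell)^k$, the $k$-th power sum $p_k$ of the roots of $Y^p-Y-r(x)$. Since the elementary symmetric functions $e_1,\dots,e_{p-2}$ of these roots all vanish, Newton's identities give $p_k=0$ for $1\le k\le p-2$ and $p_{p-1}=-1$; for $k\ge p$ the Artin--Schreier relation $y^p=y+r(x)$ yields the recursion $p_k=p_{k-p+1}+r(x)\,p_{k-p}$, together with $p_0=p\equiv 0$. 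The exponents that occur are $k=\mu_1+\mu_2-1$ with $1\le\mu_1\le p-2$ (from $t^{(\mu_1)}\ge 2$) and $2\le\mu_2\le p-1$ (from $t^{(p-\mu_2)}\ge 2$), so $2\le k\le 2p-4$, and in this whole range the recursion reduces every $p_k$ to the already-computed vanishing values. The conclusion is $\mathrm{Tr}_{F/E}(y^{\mu_1+\mu_2-1})=-\delta_{\mu_1+\mu_2,\,p}$. (Equivalently, one may expand $\sum_\ell(y+\ell)^k$ termwise and annihilate the surviving binomial coefficients via Lucas' theorem.) I expect this to be the main obstacle, since the vanishing has to be verified not merely for $k\le p-2$ but across the entire range up to $2p-4$.

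With the trace computed, the remainder is bookkeeping. The pairing vanishes automatically unless $\mu_1+\mu_2=p$, and precisely in that case $g_{p-\mu_2}(x)/g_{\mu_1}(x)=g_{\mu_1}(x)/g_{\mu_1}(x)=1$, so only the elementary term $-\mathrm{Res}_\infty(x^{\nu_1-\nu_2-1}\,dx)$ remains. Taking $1/x$ as a uniformizer at $\infty$ and using the residue properties recalled above gives $\mathrm{Res}_\infty(x^{m}\,dx)=-\delta_{m,-1}$, so this term equals $\delta_{\nu_1,\nu_2}$. I would then record the final formula
\[
\langle \omega_{\mu_1,\nu_1}, h_{\mu_2,\nu_2}\rangle = \delta_{\mu_1+\mu_2,\,p}\,\delta_{\nu_1,\nu_2}.
\]
Finally I would observe that the substitution $\mu_2=p-\mu_1$ matches the two index sets exactly: one has $t^{(p-\mu_2)}=t^{(\mu_1)}$, so the admissible ranges of $\nu_1$ and $\nu_2$ coincide, and the pairing matrix is therefore a permutation matrix. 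In particular the pairing is nondegenerate and $\{\omega_{\mu,\nu}\}$, $\{h_{\mu,\nu}\}$ are dual bases after the relabeling $h_{\mu,\nu}\mapsto h_{p-\mu,\nu}$, which is the assertion of the proposition.
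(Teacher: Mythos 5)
Your proposal is correct, and its outer structure coincides with the paper's: evaluate the pairing on basis elements, convert the sum of residues over $\pi^{-1}(\infty)$ into $\mathrm{Res}_\infty$ of a trace via Serre's lemma, pull the rational factor $g_{p-\mu_2}(x)x^{\nu_1-\nu_2-1}/g_{\mu_1}(x)$ out of the trace by $E$-linearity, reduce everything to $\mathrm{Tr}_{F/E}(y^k)$ for $k=\mu_1+\mu_2-1$, and finish with $\mathrm{Res}_\infty(x^m\,dx)=-\delta_{m,-1}$. The genuine difference is how the trace is computed, which is the technical heart in both treatments. The paper expands $\sum_{i=0}^{p-1}(y+i)^k$ with the binomial theorem, evaluates the power sums $\sum_{i=1}^{p-1}i^j$ over $\mathbb{F}_p$ (zero unless $(p-1)\mid j$, by summing a geometric progression in a generator of $\mathbb{F}_p^{\times}$), and then, for $p\le k\le 2p-3$, kills the surviving $j=p-1$ term via the divisibility $\binom{k}{p-1}\equiv 0 \pmod p$ --- a Lucas-type step. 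You instead treat $\mathrm{Tr}_{F/E}(y^k)$ as the power sum $p_k$ of the roots $y,y+1,\dots,y+p-1$ of $Y^p-Y-r(x)$: Newton's identities with $e_1=\dots=e_{p-2}=0$ and $e_{p-1}=-1$ give $p_k=0$ for $1\le k\le p-2$ and $p_{p-1}=-1$, and the recursion $p_k=p_{k-p+1}+r(x)\,p_{k-p}$ (from $(y+\ell)^p=(y+\ell)+r(x)$), together with $p_0=0$, disposes of the range $p\le k\le 2p-4$. Both computations are valid on the needed range $2\le k\le 2p-4$ (using $\mu_1\le p-2$ and $\mu_2\ge 2$, as in the paper's remark); your recursion is a clean structural substitute for the paper's binomial-coefficient divisibility argument, which is its most delicate point, while the paper's expansion is more self-contained. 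Note also that both arguments end with the same permutation matrix $\langle\omega_{\mu_1,\nu_1},h_{\mu_2,\nu_2}\rangle=\delta_{\mu_1+\mu_2,p}\,\delta_{\nu_1,\nu_2}$; you make explicit the relabeling $h_{\mu,\nu}\leftrightarrow\omega_{p-\mu,\nu}$ and the matching of index ranges via $t^{(p-\mu_2)}=t^{(\mu_1)}$, a point the paper passes over silently when it declares the bases ``dual.''
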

\begin{proof}
	We calculate the pairing between elements of the above given bases:
	\begin{align*}
	 \langle\omega_{\mu_1,\nu_1},h_{\mu_2,\nu_2}\rangle=&\sum_{P\in \pi^{-1}(\infty)}\mathrm{Res}_P\left(\frac{g_{p-\mu_2}(x)y^{\mu_2-1}}{x^{\nu_2}}\frac{x^{\nu_1-1}y^{\mu_1}}{g_{\mu_1}(x)}dx\right)\\
	 =&\sum_{P\in\pi^{-1}(\infty)}\mathrm{Res}_P\left(\frac{y^{\mu_1+\mu_2-1}g_{p-\mu_2}(x)x^{\nu_1-\nu_2}}{g_{\mu_1}(x)}\frac{dx}{x}\right)\\
	 =&\mathrm{Res}_{\infty}\left(\text{Tr}_{F/E}\left(y^{\mu_1+\mu_2-1}\right)\frac{g_{p-\mu_2}(x)x^{\nu_1-\nu_2}}{g_{\mu_1}(x)}\frac{dx}{x}\right)
	\end{align*}\par
	We have to calculate the trace $y^{\mu_1+\mu_2-1}$ for the various values of $\mu_1$ and $\mu_2$. Notice that $1\leq \mu_1+\mu_2-1\leq 2(p-1)-1$, so we will write the trace of $y^k$ for $k\in \{1,\dots,2(p-1)-1\}$:
	\begin{align*}
	\text{Tr}_{F/E}(y^k) =& \sum_{i=0}^{p-1} (y+i)^k = y^k + \sum_{i=1}^{p-1} (y+i)^k\\
	=& y^k + \sum_{i=1}^{p-1} \sum_{j=0}^{k} \binom{k}{j} y^{k-j}i^j\\
	=& y^k + \sum_{j=0}^{k} \binom{k}{j} y^{k-j}\sum_{i=1}^{p-1}i^j\\
	=& y^k + y^k\left(\sum_{i=1}^{p-1}i^0\right) + \sum_{j=1}^k \binom{k}{j} y^{k-j}\sum_{i=1}^{p-1}i^j\\
	=& \sum_{j=1}^k \binom{k}{j} y^{k-j}\sum_{i=1}^{p-1}i^j
	\end{align*}
	since $\sum_{i=1}^{p-1}i^0 = p-1 = -1$ in characteristic $p$, and this cancels out with the first $y^k$.\par
	Let's focus on the inner sum of the above expression: For $j\in\{1,\dots,p-2,p,\dots,2(p-1)-1\}$, we have: 
	$$\sum_{i=1}^{p-1} i^j = \sum_{t\in \mathbb{F}_p^{\times}} t^j = \sum_{l=0}^{p-2} (g^l)^j$$
	where $g$ is a generator of $\mathbb{F}_p^{\times}$. Since $j\in\{1,\dots,p-2,p,\dots,2(p-1)-1\}$ we have $g^j\neq 1$, and thus
	$$\sum_{i=1}^{p-1} i^j = \sum_{l=0}^{p-2} (g^j)^l = \frac{1-(g^j)^{p-1}}{1-g^j} = \frac{1-(g^{p-1})^j}{1-g^j} = 0$$
	where the last sum is the sum of terms of a geometric progression, and of course $g^{p-1} = 1$ for $g\in \mathbb{F}_p^{\times}$.\par
	On the other hand for $j=p-1$:
	$$\sum_{i=1}^{p-1}i^j = \sum_{t\in \mathbb{F}_p^{\times}} t^{p-1} = p-1 = -1 \text{ in characteristic } p.$$
	However for $k\geq p$, the binomial coefficient $\binom{k}{p-1}$ is divisible by $p$, hence it is zero in characteristic $p$.\par
	From all the above we conclude that:
	\[
	\text{Tr}_{F/E}(y^k) = 
	\begin{cases}
	-1 & \text{for } k=p-1\\
	0 & \text{for } k\in \{1,\dots,p-2,p,\dots,2(p-1)-1\}.
	\end{cases}
	\]\par
	Returning to the calculation of the pairing, we see that the result is non-zero only for $\mu_1+\mu_2-1 = p-1$, i.e. for $\mu_1=p-\mu_2$. In that case:
	\begin{align*}
	\langle\omega_{p-\mu_2,\nu_1},h_{\mu_2,\nu_2}\rangle =& \mathrm{Res}_{\infty}\left((-1)\frac{g_{p-\mu_2}(x)x^{\nu_1-\nu_2}}{g_{p-\mu_2}(x)}\frac{dx}{x}\right)\\
	=& \mathrm{Res}_{\infty}\left(\left(\frac{1}{x}\right)^{\nu_2-\nu_1}\frac{d\left(\frac{1}{x}\right)}{\left(\frac{1}{x}\right)}\right).
	\end{align*}
	From the properties of the residues, the last residue from above is non-zero if and only if $\nu_1=\nu_2$, in which case it is equal to $1$, and this completes our proof.
\end{proof}
\subsection{A $k$-vector space basis for $H^1_{dR}(X/k)$}
Similarly to the case of a Kummer extension we can compute $H^1_{dR}(X/k)$ as follows:
\begin{center}
	\resizebox{.99\hsize}{!}{$\frac{\{(\omega_0,\omega_{\infty},f_{0\infty}) \in \Omega_X(U_0) \times \Omega_X(U_{\infty}) \times \mathcal{O}_X(U_0\cap U_{\infty}) \mid df_{0\infty}=\omega_0|_{U_0\cap U_{\infty}} - \omega_{\infty}|_{U_0\cap U_{\infty}} \} }{
			\{ (df_0, df_{\infty}, f_0|_{U_0\cap U_{\infty}} - f_{\infty}|_{U_0\cap U_{\infty}}) \mid f_0\in \mathcal{O}_X(U_0), f_{\infty}\in \mathcal{O}_X(U_{\infty})\}}.$}	
\end{center}
Again, we have canonical maps
\begin{align*}
i: H^0(X,\Omega_X) & \rightarrow H^1_{dR}(X/k) \\
\omega & \mapsto [(\omega|_{U_0},\omega|_{U_{\infty}},0)]
\end{align*}
and
\begin{align*}
p: H^1_{dR}(X/k) & \rightarrow H^1(X,\mathcal{O}_X) \\
[(\omega_0, \omega_{\infty}, f_{0\infty})] & \mapsto [f_{0\infty}].
\end{align*}
Same as in the previous section, from Proposition 4.1.2 of Tait's dissertation (\cite{TaitPhd}) we get that the following short sequence is exact:
\[
0\rightarrow H^0(X,\Omega_X) \xrightarrow{i} H^1_{dR}(X/k) \xrightarrow{p} H^1(X,\mathcal{O}_X) \rightarrow 0
\]
hence the basis of the $k$-vector space $H^1_{dR}(X/k)$ can be calculated from the bases of $H^0(X,\Omega_X)$ and $H^1(X,\mathcal{O}_X)$.\par
\begin{theorem}\label{Artin-Schreier_basis}
	Let 
	$$\varphi_{\mu,\nu}(x):= xg_{p-\mu}'(x)g_{\mu-1}(x) - \nu g_{p-\mu}(x)g_{\mu-1}(x) \in k[x]$$
	for $1\leq \mu\leq p-1$ s.t. $t^{(p-\mu)}\geq 2$ and $1\leq \nu\leq t^{(p-\mu)}-1$. Also let
	$$\psi(x):=f(x)\sum_{i=1}^r l_i\left(\prod_{j=1,j\neq i}^r (x-\rho_j)\right) - f'(x)\prod_{i=1}^r(x-\rho_i)\in k[x].$$
	Let
	$$\omega_{\mu}:=\frac{(\mu-1)g_{p-\mu}(x)y^{\mu-2}}{\prod_{i=1}^r (x-\rho_i)^{l_i+1}}dx$$
	for $1\leq\mu\leq p-1 \text{ s.t. } t^{(p-\mu)}\geq 2$. Let us denote
	$$\tilde{h}_{\mu,\nu}:= \frac{g_{p-\mu}(x)y^{\mu-1}}{x^{\nu}}.$$
	The elements
	\begin{gather*}
	a_{\mu,\nu} := \left[\left(\frac{\varphi_{\mu,\nu}^{<\nu+1}(x)}{x^{\nu+1}}\omega_{\mu-1,1}+\frac{\psi^{<\nu}(x)}{x^{\nu}}\omega_{\mu},\frac{\varphi_{\mu,\nu}^{\geq
	\nu+1}(x)}{x^{\nu+1}}\omega_{\mu-1,1}+\frac{\psi^{\geq\nu}(x)}{x^{\nu}}\omega_{\mu},\tilde{h}_{\mu,\nu}\right)\right]\\
	\text{ for } 1\leq \mu \leq p-1 \text{ s.t. } t^{(p-\mu)}\geq 2 \text{ and } 1\leq \nu \leq t^{(p-\mu)}-1
	\end{gather*}
	together with the elements
	\begin{gather*}
	\delta_{\mu,\nu} := \left[\left(x^{\nu-1}\frac{y^{\mu}}{g_{\mu}(x)}dx,x^{\nu-1}\frac{y^{\mu}}{g_{\mu}(x)}dx,0\right)\right] \\
	\text{ for } 1\leq \mu \leq p-1 \text{ s.t. } t^{(\mu)}\geq 2 \text{ and } 1\leq \nu \leq t^{(\mu)}-1
	\end{gather*}
	form a basis for $H^1_{dR}(X/k)$ as a $k$-vector space.
\end{theorem}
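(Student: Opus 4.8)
The plan is to mirror the strategy used for Theorem~\ref{Kummer_basis}. Since the sequence $0\to H^0(X,\Omega_X)\xrightarrow{i} H^1_{dR}(X/k)\xrightarrow{p} H^1(X,\mathcal{O}_X)\to 0$ is exact, and since the $\delta_{\mu,\nu}$ are manifestly the images under $i$ of the holomorphic differentials $\omega_{\mu,\nu}$ (which form a basis of $H^0(X,\Omega_X)$) while the $a_{\mu,\nu}$ project under $p$ onto the classes $h_{\mu,\nu}=[\tilde h_{\mu,\nu}]$ (which form a basis of $H^1(X,\mathcal{O}_X)$), it suffices to prove that each triple defining $a_{\mu,\nu}$ is a genuine de~Rham cocycle. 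Once this is established, a diagram chase through the exact sequence shows that the $a_{\mu,\nu}$ together with the $\delta_{\mu,\nu}$ form a $k$-basis.

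First I would differentiate the defining equation. In characteristic $p$ the relation $y^p-y=r(x)$ gives $dy=-r'(x)\,dx$, and a direct computation of $r'(x)=\bigl(f(x)/\prod_i(x-\rho_i)^{l_i}\bigr)'$ shows that $r'(x)=-\psi(x)/\prod_{i=1}^r(x-\rho_i)^{l_i+1}$, which is exactly where the polynomial $\psi(x)$ enters. Applying the quotient rule to $\tilde h_{\mu,\nu}=g_{p-\mu}(x)y^{\mu-1}/x^{\nu}$, substituting this expression for $dy$, and using $y^{\mu-1}\,dx=g_{\mu-1}(x)\,\omega_{\mu-1,1}$ together with the definition of $\omega_\mu$, I expect to obtain the clean identity
\[
d\tilde h_{\mu,\nu}=\frac{\varphi_{\mu,\nu}(x)}{x^{\nu+1}}\,\omega_{\mu-1,1}+\frac{\psi(x)}{x^{\nu}}\,\omega_{\mu}.
\]
Splitting each rational function of $x$ by degree, $\varphi_{\mu,\nu}=\varphi_{\mu,\nu}^{<\nu+1}+\varphi_{\mu,\nu}^{\geq\nu+1}$ and $\psi=\psi^{<\nu}+\psi^{\geq\nu}$, then produces precisely the two components recorded in the statement and verifies the cocycle relation linking $d\tilde h_{\mu,\nu}$ to the difference of the $U_0$- and $U_\infty$-pieces.

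The substantive part of the proof is showing that the two halves land in the correct spaces, i.e.\ that the low-degree component lies in $\Omega_X(U_0)$ and the high-degree component in $\Omega_X(U_\infty)$. Membership of the high-degree piece in $\Omega_X(U_\infty)$ is easy: after division by $x^{\nu+1}$ (resp.\ $x^{\nu}$) the high-degree truncations are polynomials in $x$, hence have no poles above $0$, and both $\omega_{\mu-1,1}$ and $\omega_\mu$ are regular above $0$. The delicate point is regularity of the low-degree component away from $\pi^{-1}(0)$, which I would check place by place using the divisor data from the Computations subsection. At each ramification point $P_i$ the rational prefactors are regular (as $\rho_i\neq 0$), so one needs $\omega_{\mu-1,1}$ and $\omega_\mu$ to be regular at $P_i$; the first has $v_{P_i}(\omega_{\mu-1,1})=\upsilon_i^{(\mu-1)}\geq 0$, while a divisor count gives $v_{P_i}(\omega_\mu)=p-2-\upsilon_i^{(p-\mu)}$, which is $\geq 0$ precisely because $\mu\geq 2$ and $(l_i,p)=1$ force $\upsilon_i^{(p-\mu)}\neq p-1$. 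At the points over $\infty$ one combines the degree bounds $\deg\varphi_{\mu,\nu}^{<\nu+1}\leq\nu$ and $\deg\psi^{<\nu}\leq\nu-1$ with the pole orders $v_{P_\infty}(\omega_{\mu-1,1})=t^{(\mu-1)}-2$ and $v_{P_\infty}(\omega_\mu)=l+r-t^{(p-\mu)}-2$, using the inequalities $t^{(\mu-1)}\geq 1$ and $t^{(p-\mu)}\leq l$ to conclude that both summands are holomorphic there.

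I expect this last verification over $\infty$, together with the $\mu\geq 2$ argument guaranteeing regularity of $\omega_\mu$ at the $P_i$, to be the main obstacle: it is exactly where the arithmetic of characteristic $p$ (the coprimality $(l_i,p)=1$ and the Euclidean divisions defining $m_i^{(\mu)}$ and $\upsilon_i^{(\mu)}$) must conspire with the degree-based splitting, and where the bookkeeping is most likely to conceal sign or off-by-one errors.
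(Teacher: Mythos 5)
Your proposal is correct and follows essentially the same route as the paper's own proof: reduce via the exact sequence to well-definedness of the $a_{\mu,\nu}$, derive the identity $d\tilde{h}_{\mu,\nu}=\frac{\varphi_{\mu,\nu}(x)}{x^{\nu+1}}\omega_{\mu-1,1}+\frac{\psi(x)}{x^{\nu}}\omega_{\mu}$ by differentiating the curve equation, and then check regularity via the valuations $v_{P_i}(\omega_{\mu})=p-2-\upsilon_i^{(p-\mu)}\geq 0$ (using $\mu\geq 2$ and $(l_i,p)=1$) together with the fact that $\omega_{\mu}$ has at worst a simple pole above $\infty$. Your explicit degree-bound bookkeeping at infinity simply fills in what the paper leaves as ``easily seen.''
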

\begin{proof}
	Similarly to the corresponding theorem about Kummer extensions, we just need to show that the $a_{\mu,\nu}$'s are well-defined elements of $H^1_{dR}(X/k)$.\par
	First we write down the differential of $\tilde{h}_{\mu,\nu}$:
	\begin{align*}
	d\left(\frac{g_{p-\mu}(x)y^{\mu-1}}{x^{\nu}}\right) 
	=& 
	\frac{(g_{p-\mu}'(x)y^{\mu-1} \!-\nu g_{p-\mu}(x)y^{\mu-1}x^{-1})dx\!+(\mu-1)g_{p-\mu}(x)y^{\mu-2}dy}{x^{\nu}}\\
	=& \frac{xg_{p-\mu}'(x)g_{\mu-1}(x) - \nu g_{p-\mu}(x)g_{\mu-1}(x)}{x^{\nu+1}}\left(\frac{y^{\mu-1}}{g_{\mu-1}(x)}dx\right)\\ 
	&+ \frac{(\mu-1)g_{p-\mu}(x)y^{\mu-2}dy}{x^{\nu}}\\
	=& \frac{\varphi_{\mu,\nu}(x)}{x^{\nu+1}}\omega_{\mu-1,1} + \frac{(\mu-1)g_{p-\mu}(x)y^{\mu-2}}{x^{\nu}}dy.
	\end{align*}\par
	For the differential of $y$ we can use the defining equation of our curve, since we are in characteristic $p$:
	\begin{align*}
	d(y^p-y) =& d\left(\frac{f(x)}{\prod_{i=1}^r(x-\rho_i)^{l_i}}\right)\\
	\Rightarrow -dy =& \frac{f'(x) - f(x)\sum_{i=1}^r\frac{l_i}{x-\rho_i}}{\prod_{i=1}^r(x-\rho_i)^{l_i}}\\
	\Rightarrow dy = & \frac{f(x)\sum_{i=1}^r l_i\left(\prod_{j=1,j\neq i}^r(x-\rho_j)\right) - f'(x)\prod_{i=1}^r(x-\rho_i)}{\prod_{i=1}^r (x-\rho_i)^{l_i+1}}\\
	\Rightarrow dy = & \frac{\psi(x)}{\prod_{i=1}^r (x-\rho_i)^{l_i+1}}.
	\end{align*}\par
	Thus, we end up with:
	$$d\left(\frac{g_{p-\mu}(x)y^{\mu-1}}{x^{\nu}}\right) = \frac{\varphi_{\mu,\nu}(x)}{x^{\nu+1}}\omega_{\mu-1,1} + \frac{\psi(x)}{x^{\nu}}\omega_{\mu}.$$
	We will show that for all $\mu\in \{1,\dots,p-1\}$ s.t. $t^{(p-\mu)}\geq 2$, $\omega_{\mu}$ is either holomorphic or has pole of order one at each point above $\infty$. Then it's easily seen that the $a_{\mu,\nu}$'s are well-defined, so the proof will be over.\par
	Notice that $\omega_{\mu}$ can have poles only at the points above $\infty$ (we'll show that when that happens the order of the pole is one) or at the $P_i$'s (we'll show that this never happens), since $f(x)$ never appears in the denominator - remember that $\mu\geq 2$ from the remark we made after writing the basis of $H^1(X,\mathcal{O}_X)$ above.\par
	Firstly, let $P_{\infty,m}$ be any point above $\infty$. Then:
	\begin{align*}
	v_{P_{\infty,m}}(\omega_{\mu}) =& v_{P_{\infty,m}}(g_{p-\mu}(x)) + v_{P_{\infty,m}}(dx) + v_{P_{\infty,m}}\left(\prod_{i=1}^r(x-\rho_i)^{l_i+1}\right)\\
	=& -t^{(p-\mu)}-2+\sum_{i=1}^r(l_i+1)\\
	=& -2 +\sum_{i=1}^r (l_i - m_i^{(p-\mu)} + 1),
	\end{align*}
	where for all $i\in \{1,\dots,r\}$
	$$(p-1)(l_i+1) - (p-\mu) l_i = pm_i^{(p-\mu)}+\upsilon_i^{(p-\mu)},$$
	which leads to
	$$(p-1)(l_i-m_i^{(p-\mu)}+1) = (p-\mu)l_i + m_i^{(p-\mu)} + \upsilon_i^{(p-\mu)}.$$
	The right hand side of the last equation is strictly positive, so the same is true for the left hand side, and more specifically
	$$l_i-m_i^{(p-\mu)}+1 > 0, \text{ for all } i\in \{1,\dots,r\}.$$
	Therefore if $r>1$, then $\sum_{i=1}^r(l_i-m_i^{(p-\mu)} +1)\geq 2$ and $\omega_{\mu}$ has no pole at any point above $\infty$. If $r=1$, then $l_1-m_1^{(p-\mu)}+1\geq 1$ and $\omega_{\mu}$ is either holomorphic (if $l_1>m_1^{(p-\mu)}$) or has a pole at each $P_{\infty,m}$ of order one (if $l_1=m_1^{(p-\mu)}$; for example $p=7,\mu=6,l_1=2$ give $m_1^{(1)}=2=l_1$).\par
	Now let's check whether $\omega_{\mu}$ has pole at any $P_i$:
	\begin{align*}
	v_{P_i}(\omega_{\mu})=& v_{P_i}(g_{p-\mu}(x)) + v_{P_i}(y^{\mu-2}) + v_{P_i}(dx) - v_{P_i}\left(\prod_{i=1}^r(x-\rho_i)^{l_i+1}\right)\\
	=& pm_i^{(p-\mu)} - (\mu-2)l_i + (p-1)(l_i+1) - p(l_i+1)\\
	=& (\mu-1)l_i + p-1 - \upsilon_i^{(p-\mu)} - (\mu-2)l_i - l_i - 1\\
	=& p-2-\upsilon_i^{(p-\mu)}.
	\end{align*}
	If $\upsilon_i^{(p-\mu)} = p-1$, then $(\mu-1)l_i\equiv 0\mod p$ with $(p,l_i)=1$ by assumption, so $\mu=1$, but that's not allowed from the remark we made after writing the basis for $H^1(X,\mathcal{O}_X)$. Hence $\upsilon_i^{(p-\mu)}\leq p-2$, which implies that $v_{P_i}(\omega_{\mu})\geq 0$, as we wanted.
\end{proof}

 \def\cprime{$'$}

\end{document}